\newcommand{\dd}{\mathrm{d}} 
\setlist{
  listparindent=\parindent,
  parsep=0pt,
}
\numberwithin{equation}{section} 
\theoremstyle{plain} 
\newtheorem{theorem}{Theorem}[section]
\newtheorem{lemma}[theorem]{Lemma}
\newtheorem{proposition}[theorem]{Proposition}
\theoremstyle{definition} 
\newcommand\CorrespondingAuthor[1]{%
  \begingroup%
  \def\@makefnmark{}%
  \footnotetext{Corresponding author: #1}%
  \endgroup%
}
\renewenvironment{abstract}{%
  \small%
  \begin{center}%
    \bfseries \abstractname\vspace{-.5em}\vspace{\z@}%
  \end{center}%
  \quote%
}{\endquote}
\DeclareRobustCommand*\subref{\@ifstar\sf@@subref\sf@subref}
\font\dsrom=dsrom10 scaled 1200
\newcommand{\R}{\mathbb{R}}
\newcommand{\bx}{\mathbf{x}}
\newcommand{\bX}{\mathbf{X}}
\newcommand{\LnR}{{\Lambda_{n,\widetilde R}}}
\DeclareMathOperator\Var{Var}
\DeclareMathOperator\E{E}
\newcommand{\ind}{\textrm{\dsrom{1}}}
\let\oldtextbf\textbf
\renewcommand\textbf[1]{\oldtextbf{\boldmath #1}}
\title{\ Poisson intensity parameter estimation for stationary Gibbs point processes of finite interaction range}
\author{Jean-Fran\c{c}ois Coeurjolly and Nadia Morsli

Jean-Francois.Coeurjolly@upmf-grenoble.fr; nadia.morsli@imag.fr
Laboratory Jean Kuntzmann, Grenoble University, France.}
\date{}
\begin{document}
\maketitle

\begin{abstract}
We introduce a semi-parametric estimator of the Poisson intensity parameter of a  spatial stationary Gibbs point process. Under very mild assumptions satisfied by a large class of Gibbs models, we establish its strong consistency and asymptotic normality. We also consider its finite-sample properties in a simulation study.
\end{abstract}
\textbf{Keywords:} central limit theorem; Georgii-Nguyen-Zessin formula; Papangelou conditional intensity; semi-parametric estimation.
 
\section{Introduction}

Spatial Gibbs  point processes form a major class of stochastic processes allowing the modelling  of dependence of spatial point patterns. They have applications in many scientific fields such as biology, epidemiology, geography, astrophysics, physics, and economics.  General references covering as well theoretical as practical aspects of these processes are e.g. \cite{B-StoKenMecRus87}, \cite{B-MolWaa04} or \cite{B-IllPenSto08}.

Gibbs point processes in $\R^d$ can be defined and characterized through the Papangelou conditional intensity (see \cite{B-MolWaa04}) which is a function $\lambda:\R^d \times \Omega \to \R_+$ where $\Omega$ is the space of locally finite configurations of points in $\R^d$. The  Papangelou conditional intensity can be interpreted as follows: for any $u\in \R^d$ and $\bx \in \Omega$, $\lambda(u,\bx)\dd u$ corresponds to the conditional probability of observing a point in a ball of volume $\dd u$ around $u$ given the rest of the point process is $\bx$. In the present paper, we assume that the Papangelou conditional intensity is decomposed as
\begin{equation}\label{eq:lbeta}
  \lambda(u,\bx) = \beta \, \widetilde{\lambda}(u,\bx),
\end{equation}
where $\beta$ is a positive real parameter and where $\widetilde \lambda:\R^d \times \Omega\to \R_+$. If the point process corresponds to a homogeneous Poisson point process $\widetilde\lambda(u,\bx)=1$. We further assume that $\widetilde \lambda(u,\emptyset)=1$ which in case means that $\beta=\lambda(u,\emptyset)$ and that $\widetilde \lambda$ represents the higher order interaction term. Therefore, we suggest the name {\it Poisson intensity parameter} for the parameter $\beta$. The goal of this paper is to develop an estimator of $\beta$ without specifying anything on the function $\widetilde \lambda$. Our semi-parametric estimator is based on a smart application of the Georgii-Nguyen-Zessin formula (see \cite{A-Geo76} for a general presentation and Section \ref{sub}) which is also a way to characterize a Gibbs point process.
We propose a very simple ratio-estimator of $\beta$ based on a single observation of a stationary spatial Gibbs point process  observed in a bounded window and we prove its asymptotic properties (strong consistency and asymptotic normality) as the window expands to $\R^d$. 
The problem considered in this paper is related to two different works that we detail below. 

First, due to its easy interpretability the most popular Gibbs model is without any doubt the (isotropic) pairwise  interaction point process. Its Papangelou conditional intensity is given by
\begin{equation}\label{eq:pair}
   \lambda(u,\bx) = \beta \widetilde \lambda(u,\bx) \quad \mbox{ with } \quad
    \widetilde \lambda (u,\bx) = \prod_{v \in \bx\setminus u} g(\| v-u\|).
 \end{equation} 
The function $g$ is called the (isotropic) pairwise interaction function. Estimating the model~\eqref{eq:pair} consists in estimating $\beta$ and the function $g$. To the best of our knowledge, this awkward problem has been considered only in \cite{A-DigGatSti87}. The authors propose several methods to estimate non parametrically the function $g$. Our objective is to make some advances in that direction which justifies first to propose an estimator of $\beta$ independently of the function $g$ and to understand its properties.

Second, many popular methods exist to estimate a parametric Gibbs model. This includes methods based on the likelihood or the pseudo-likelihood and the Takacs-Fiksel method. We refer for instance to \cite{B-MolWaa04}, \cite{B-IllPenSto08} or \cite{C-Mol10} and the references therein for an overview of these methods. A recent contribution \cite{A-BadDer12} investigates a new approach. The following exponential model is considered
\begin{equation}\label{eq:exp}
   \lambda(u,\bx) = \beta \widetilde \lambda(u,\bx) \quad \mbox{ with } \quad
   \log \widetilde \lambda (u,\bx) = \theta^\top v(u,\bx)
 \end{equation} 
where $\theta$ is a real $p$-dimensional parameter vector, $v(u,\bx)=(v_1(u,\bx),\ldots,v_p(u,\bx))^\top$ for measurable functions $v_i:\R^d \times \Omega \to \R$, $i=1,\dots,p$. \cite{A-BadDer12} develops a new estimator of the parameter $\theta$ where the parameter $\beta$ is treated as a nuisance parameter. The main advantage of the estimator is that it is obtained via the resolution of a linear system of  equations whereas all previous methods cited above require an optimization procedure. The ratio-estimator of $\beta$ we propose is also very  computationally cheap. Hence, it could  serve as a complement of the estimate of $\theta$ proposed in~\cite{A-BadDer12} to completely identify the model~\eqref{eq:exp}.

The rest of the paper is organized as follows. Section~\ref{sec:background} gives the background and notation, presents examples of Gibbs point processes and our main assumptions. Section~\ref{sec:estimator} deals with the core of the paper. We present our estimator and derive its asymptotic properties as the window of observation expands to $\R^d$. A simulation study is conducted in Section~\ref{sec:simuls} where we illustrate the efficiency of our estimator. Proofs of the results are postponed to Appendix. 

\section{Background and assumptions}\label{sec:background}

\subsection{Basic definitions and tools}\label{sub}
 
A point process $\bX$ in $\R^d$ is a locally
finite random subset of $\R^d$, i.e. the number of points  $N (\Lambda)= n(\bX_\Lambda)$ of the restriction of $\bX$ to $\Lambda$ is a 
finite random variable whenever $\Lambda$ is a bounded Borel set of $\R^d$ (see \cite{B-DalVer88}). 
We let $\Omega$ be the space of locally finite point configurations. If the distribution of $\bX$ is translation invariant, we say that $\bX$ is stationary. 

The Papangelou conditional intensity completely characterizes the Gibbs point process in terms of the Georgii-Nguyen-Zessin (GNZ) Formula (see  \cite{Papangelou} and
\cite{A-Zes09} for historical comments and \cite{A-Geo76} or
\cite{A-NguZes79b} for a general presentation). The GNZ formula states that for any measurable function 
$h:\R^d\times \Omega$ such that the left or right hand side exists
\begin{equation}\label{eq:gnz}
\E\displaystyle \sum_{u\in{\bX} }h(u, \bX\setminus u)= \E\displaystyle \int_{\R^d} h(u,\bX)
\lambda(u,\bX)\dd u. 
\end{equation}
We will not discuss how to consistently specify the Papangelou conditional intensity to ensure the existence of a Gibbs point process on $\R^d$, but rather we simply assume we are given a well-defined Gibbs point process. The reader interested in a deeper presentation of Gibbs point processes and the existence problem is referred to \cite{B-Rue69,B-Pre76} or
\cite{A-DerDroGeo12}, see also Section~\ref{sec:examples} for a few examples.

The concept of innovation for spatial point processes is proposed in \cite{A-BadTurMolHaz05} and is a key-ingredient in this paper. Inspired by the GNZ formula, it is defined as follows: for some measurable function $h:\R^d \times \Omega\to \R$, the $h-$weighted innovation
 computed in a bounded domain $\Lambda$
is the centered random variable defined by
\begin{equation}\label{eq:defInn}
I_{\Lambda} (\bX,h)= \displaystyle \sum_{u\in{\bX_{\Lambda}} }h(u, \bX\setminus u)-\displaystyle \int_{\Lambda} h(u,\bX)\lambda(u,\bX) \dd u.
\end{equation}

We end this paragraph with a few notation. 
The volume of a bounded Borel set $\Lambda$ of $\R^d$ is denoted by $|\Lambda|$.
The norm $\|\cdot\|$ stands for the standard Euclidean norm. For two subsets $\Lambda_1$ and $\Lambda_2$ of $\R^d$, $d(\Lambda_1,\Lambda_2)$ corresponds to the minimal distance between these two subsets. Finally, $ B(u,R)$ denotes the closed ball centered at $u\in \R^d$ with radius $R$.

\subsection{Examples of Gibbs point processes} \label{sec:examples}

We present in this paragraph several classical examples through their Papangelou conditional intensity. Further details on these models can be found in \cite{B-MolWaa04} for examples (i)-(iv) and (vi)-(vii) and to \cite{A-Gey99} for the example (v). 
Let $u\in \R^d, \bx\in \Omega$ and $R>0.$

(i) Strauss point process. 
\[
 \lambda(u,\bx)=\beta\gamma^{ n_{[0,R]}(u,\bx \setminus u)} 
 \]
where $\beta> 0, \gamma \in [0,1]$ and  $n_{[0,R]}(u,\bx)= \displaystyle\sum_{v\in \bx} \ind(\|v-u\|\leq R)$
represents the number of $R$-closed neighbours of $u$ in $\bx.$

(ii) Strauss point process with hard-core. Let $0<\delta<R$ the Papangelou conditional intensity is 
\[
  \lambda(u,\bx) = \left\{ 
  \begin{array}{ll}
    0 & \mbox{ if } d(u,\bx \setminus u)\leq \delta\\
    \beta\gamma^{ n_{]\delta,R]}(u,\bx \setminus u)} & \mbox{ otherwise}
   \end{array} 
  \right.
\]
where $\beta> 0, \gamma \in [0,1]$.

(iii) Piecewise Strauss point process. 
\[
  \lambda(u,\bx)=\beta\prod_{j=1}^p  \gamma_j ^{n_{[R_{j-1},R_j]}(u,\bx \setminus u)}
\]
 where $\beta>0, \gamma_j \in [0,1], n_{[R_{j-1},R_j]}(u,\bx)=\displaystyle\sum_{v\in \bx} \ind(\|v-u\|\in [R_{j-1},R_j])$ and $R_0=0<R_1<\ldots<R_p=R<\infty$.
 
(iv) Triplets point process.  
\[
 \lambda(u,\bx)=\beta\gamma^{s_{[0,R]}(\bx\cup u)-s_{[0,R]}(\bx\setminus u)}
\]
 where $\beta>0, \gamma\in [0,1]$ and $s_{[0,R]}(\bx)$ is the number of unordered triplets that are closer than $R$. 

(v) Geyer saturation point process  with saturation threshold $s$
\[
   \lambda(u,\bx)= \beta \gamma^{t(\bx\cup u) -t(\bx \setminus u)}
\]
where   $\beta > 0, \gamma \in [0,1], s\geq 1$ and $t(\bx)= \sum_{v\in \bx}\min(s,n_{[0,R/2]}(v,\bx\setminus v))$.

(vi) Lennard-Jones model with finite range. This corresponds to the pairwise interaction point process defined by
\[
 \lambda(u,\bx)= \beta \prod_{v\in \bx \setminus u} g(\| v-u\|)\big )
 \]
with for $r>0$, $\log g(r)=  \big( \theta^6 r^{-6}- \theta^{12}r^{-12}  \big)\ind_{]0,R]}(r), $
$\theta>0$ and $\beta>0$.

(vii) Area-interaction point process.
\[
\lambda(u,\bx)= \beta \gamma^{A(\bx\cup u)-A(\bx \setminus u)}
\]
for $\beta>0, \gamma>0$ and where $A(\bx)=|  \cup_{v\in \bx} B(v,R/2)|.$

Examples (i)-(iii) and (v)-(vi) are pairwise interaction point processes whereas
example (iv) (resp. (vii)) is based on interactions on cliques of order 3 (resp. of any order).  Example (vi) is the only example which is not locally stable but only Ruelle superstable, see  \cite{B-Rue69}. We recall that the local stability property states that (for stationary Gibbs models) the Papangelou conditional intensity is uniformly bounded by a constant (see \cite{B-MolWaa04}).

\subsection{Main assumptions and discussion}

In this section, we consider the model~\eqref{eq:lbeta} and we present the assumptions we require to propose in Section~\ref{sec:defEst} an estimator of the { Poisson intensity parameter}. To stress on the fact that the Papangelou conditional intensity satisfies~\eqref{eq:lbeta} and that we aim at estimating $\beta$ we denote, from now on, the corresponding Papangelou conditional intensity $\lambda_\beta$.

The two following assumptions will be considered and discussed throughout the paper.

(a) The  Papangelou conditional intensity  has a finite range $R$, i.e. 
\begin{equation}\label{eq:range}
 \lambda_{\beta}(u,\bx)= \lambda_{\beta}(u,\bx_{B(u, R)}),
\end{equation}
for any  $u\in \R^d$, $ \bx\in \Omega, \beta>0$. 

(b) The function $\widetilde\lambda$ satisfies for any $u\in \R^d$
\begin{equation}\label{eq:ltilde}
\widetilde{\lambda}(u,\emptyset) = 1. 
\end{equation}

As this paper deals with stationary Gibbs point processes, we stress on the fact tha the Papangelou conditional intensity is invariant by translation for any $\beta>0$. We emphasize that no more assumption will be required.
In particular we do not require that the Gibbs point process is locally stable. This is noticeable since local stability is often an assumption  
made when dealing with such 
models, see e.g. \cite{A-BilCoeDro08}. We also underline that in the following, 
we do not assume that $R$ is known. In place of this, we assume we are given an upper-bound of $R$.

Let us discuss these assumptions regarding the examples presented in Section~\ref{sec:examples}.
Examples (i)-(vii) exhibit Papangelou conditional intensities 
which satisfy \eqref{eq:lbeta} and \eqref{eq:range} with finite 
range $R$.   Example (i)-(vi) 
satisfy the assumption~\eqref{eq:ltilde}, whereas example (vii) is a counter-example since for the latter one
$\widetilde{\lambda}(u,\emptyset)=\gamma^{\pi(R\diagup2)^2} \neq 1$ 
for $\gamma>0$. Condition (b) can thus be viewed as an identifiability condition.

\section{Semi-parametric estimator of $\beta^{\star}$} \label{sec:estimator}

\subsection{Definition} \label{sec:defEst}

In the following, we let $\beta^\star>0$ denote the true parameter and we denote by  
$P_{\beta^\star}$ the probability measure of $\bX$, i.e. $\bX \sim P_{\beta^\star}$.

Our estimator is based on an application of the GNZ formula \eqref{eq:gnz} for the following function $h$ given for all $\  u\in \R^d$, $\bx\in \Omega$ 
 and $\widetilde{R}\geq R$ by 
\begin{equation} \label{eq:defh}
h(u,\bx)=\ind(d(u,\bx)>\widetilde R)=\ind(\bx\cap  B(u,\widetilde{R})=\emptyset). 
\end{equation}
 The idea is formulated by the following result. Let $\Lambda$ be a bounded region of $\R^d$ and let
 $N_{\Lambda}(\bX;\widetilde{R})$ and $V_{\Lambda}(\bX;\widetilde{R})$ denote the following random variables
\begin{align*}
N_\Lambda(\bX;\widetilde R) &= \sum_{u\in{\bX_{\Lambda}} }\ind ((\bX \setminus u)\cap  B(u,\widetilde{R})=\emptyset)\\
V_\Lambda(\bX;\widetilde R) &= \int_\Lambda\ind (\bX \cap  B(u,\widetilde{R})=\emptyset) \dd u.\\
\end{align*}
The variable $N_\Lambda$ corresponds to the number of points in $X_\Lambda$ that are separated from other points in $X$ by a distance greater than $\widetilde R$ while the variable $V_\Lambda$ is the volume of $\Lambda$ after digging a disk hole of radius $\widetilde R$ centered at each point of $X$.


\begin{proposition}\label{prop:h} Assume~\eqref{eq:lbeta},~\eqref{eq:range} and~\eqref{eq:ltilde},
then we have for any $\widetilde R\geq R$
\begin{equation}
\E N_{\Lambda}(\bX;\widetilde{R}) =\beta^\star\E V_{\Lambda}(\bX;\widetilde{R})
\end{equation}
 where $\E$ denotes the expectation with respect to $P_{\beta^\star}$.
\end{proposition}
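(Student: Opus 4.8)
The plan is to apply the GNZ formula~\eqref{eq:gnz} to the function $h$ defined in~\eqref{eq:defh}, restricted to the window $\Lambda$, and then to exploit the finite-range and identifiability assumptions to collapse the right-hand side. Concretely, I would apply~\eqref{eq:gnz} to the function $(u,\bx) \mapsto \ind(u \in \Lambda)\, h(u,\bx)$. The left-hand side of~\eqref{eq:gnz} then becomes exactly $\E N_\Lambda(\bX;\widetilde{R})$, since summing $\ind(u\in\Lambda)\,\ind((\bX\setminus u)\cap B(u,\widetilde R)=\emptyset)$ over $u\in\bX$ reproduces the definition of $N_\Lambda$. The right-hand side becomes $\E\int_\Lambda h(u,\bX)\,\lambda_{\beta^\star}(u,\bX)\,\dd u$. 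Because $h$ is an indicator and $\Lambda$ is bounded, the left-hand side is dominated by $\E\, n(\bX_\Lambda)<\infty$, so both sides exist and the formula applies without integrability concerns.

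The heart of the argument --- and the reason for this particular choice of $h$ --- is a pointwise identification of the integrand on the right-hand side. The key observation is that $h(u,\bX)=1$ forces $\bX\cap B(u,\widetilde R)=\emptyset$, and since $\widetilde R\geq R$ we have $B(u,R)\subseteq B(u,\widetilde R)$, hence $\bX_{B(u,R)}=\emptyset$ as well. The finite-range assumption~\eqref{eq:range} then gives $\lambda_{\beta^\star}(u,\bX)=\lambda_{\beta^\star}(u,\bX_{B(u,R)})=\lambda_{\beta^\star}(u,\emptyset)$. Combining the decomposition~\eqref{eq:lbeta} with the identifiability condition~\eqref{eq:ltilde} yields $\lambda_{\beta^\star}(u,\emptyset)=\beta^\star\,\widetilde\lambda(u,\emptyset)=\beta^\star$. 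Checking the cases $h=0$ and $h=1$ separately, one obtains the identity $h(u,\bX)\,\lambda_{\beta^\star}(u,\bX)=\beta^\star\,h(u,\bX)$ for every $u$, so the constant factor $\beta^\star$ pulls out of the integral.

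Once this identity is in place the conclusion is immediate: the right-hand side equals $\beta^\star\,\E\int_\Lambda h(u,\bX)\,\dd u=\beta^\star\,\E V_\Lambda(\bX;\widetilde R)$, which is the claimed equality. I do not expect a genuine obstacle here; the substantive content is concentrated entirely in the pointwise identification above. The only points requiring a little care are the harmless insertion of the indicator $\ind(u\in\Lambda)$ so that GNZ directly produces the windowed quantities $N_\Lambda$ and $V_\Lambda$, and the observation that the conditioning event in $h$ empties the smaller ball $B(u,R)$ relevant to the finite-range property. Everything else is a direct substitution.
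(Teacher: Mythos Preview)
Your proposal is correct and follows essentially the same route as the paper: apply the GNZ formula to the test function $h(u,\bx)=\ind(\bx\cap B(u,\widetilde R)=\emptyset)$ restricted to $\Lambda$, then use the finite-range property together with $\widetilde R\ge R$ to reduce $\lambda_{\beta^\star}(u,\bX)$ to $\lambda_{\beta^\star}(u,\emptyset)=\beta^\star$ on the event $\{h=1\}$. The only cosmetic difference is that the paper factors out $\beta^\star$ first and works with $\widetilde\lambda$, whereas you keep $\lambda_{\beta^\star}$ until the end; your version is in fact a touch more explicit about integrability and the windowing indicator.
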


\begin{proof}
Using the GNZ formula \eqref{eq:gnz} applied to the function $h$ defined by \eqref{eq:defh}, we obtain
\begin{align*}
\E N_{\Lambda}(\bX;\widetilde{R}) &= \E \sum_{u\in{\bX_{\Lambda}} }\ind ((\bX \setminus u)\cap  B(u,\widetilde{R})=\emptyset) \\
&= \beta^\star \E \int_{\Lambda}\ind(\bX\cap  B(u,\widetilde{R})=\emptyset )\widetilde{\lambda}(u,\bX) \dd u .
\end{align*}
Now, using the finite range property \eqref{eq:range}, the assumption \eqref{eq:ltilde} and the fact that $\widetilde R\geq R$ we continue with
\begin{align*}
\E N_{\Lambda}(\bX;\widetilde{R}) &=\beta^\star \E \int_{\Lambda}\ind(\bX\cap  B(u,\widetilde{R})=\emptyset ) \widetilde{\lambda}(u,\bX\cap  B(u,R)) \dd u \\
&= \beta^\star \E \int_\Lambda \ind(\bX\cap  B(u,\widetilde{R})=\emptyset ) \widetilde\lambda(u,\emptyset) \dd u \\
&= \beta^\star\E V_{\Lambda}(\bX;\widetilde{R}).
\end{align*}
\end{proof}

Using a classical ergodic theorem for spatial point processes obtained in \cite{A-NguZes79}, we can expect that when the observation domain is large 
$N_\Lambda(\bX;\widetilde R)\simeq \E N_{\Lambda}(\bX;\widetilde R)$ and $V_\Lambda(\bX;\widetilde R)\simeq \E V_{\Lambda}(\bX;\widetilde R)$. Using this and Proposition~\ref{prop:h}, we are suggested a natural and computationally cheap ratio-estimator of the parameter $\beta^\star$. We define this estimator and present its asymptotic properties in the next section.

\subsection{Asymptotic properties}

To derive asymptotic properties of our procedure when the window expands to $\R^d$, we assume that  $\bX$ is observed in 
$\Lambda_n$ where $(\Lambda_n)_{n\geq 1}$ is a sequence of cubes growing up to $\R^d$. 

When $u$ is a point close to the boundary of the window
$\Lambda_n$, we can only evaluate $h(u,\bX \cap\Lambda_n\cap B(u, \widetilde R))$.
One simple strategy for eliminating the edge effect bias is the border method, i.e. by considering the erosion  
of $\Lambda_n$ by $\widetilde R$ defined by
\[
\LnR={\Lambda_n}_{\ominus \widetilde R}=\{u\in \Lambda_n: B(u,\widetilde R)\subseteq \Lambda_n \}.   
\]
Then we define our estimator by
\begin{equation}\label{eq:defEst}
\widehat{\beta}_n=\widehat{\beta}_n(\bX;\widetilde R)=
\frac{ N_\LnR(\bX;\widetilde{R})}{V_\LnR (\bX;\widetilde{R})}.
\end{equation}

We let $F$ denote the empty space function (see \cite{B-MolWaa04}) given, in the stationary case, by
\[
  F(r)=P_{\beta^\star}(\bX\cap B(0, r)\neq \emptyset) = 1 - P_{\beta^\star}(\bX\cap B(0, r)= \emptyset)  
\]
for $r>0$. 
Following this definition, by $F_{u,v}:\R_+\to [0,1]$ for any $u,v \in \R^d$ we denote the function given by
 \[
  F_{u,v}(r)=1 - P_{\beta^\star} \big( \bX\cap B(u, r) =  \emptyset,  \bX\cap B(v, r) = \emptyset \big)
 \]
 for $r>0$. We now state our main result. 

\begin{proposition} \label{prop:asymp}
Assume~\eqref{eq:lbeta},~\eqref{eq:range} and \eqref{eq:ltilde},
and let $\widetilde R\geq R$. Then we have the following statements as $n\to \infty$.

\noindent (i) $\widehat{\beta}_n$ is a strongly consistent estimator of $\beta^\star$.

\noindent (ii) If $P_{\beta^\star}$ is ergodic, the following convergence in distribution holds
\begin{equation} \label{eq:clt1}
|\LnR|^{1/2} (\widehat{\beta}_n-\beta^{\star})\stackrel{d}\longrightarrow \mathcal{N}
(0,\sigma^2)  
\end{equation}
where
\begin{equation}\label{eq:sigma}
\sigma^2=\sigma^2(\beta^{\star},\widetilde R)= \frac{\beta^\star}{1-F(\widetilde R)} + 
\frac{{\beta^\star}^2}{(1-F(\widetilde{R}))^2}  \int_{B(0, \widetilde R)}(1-F_{0,v}(\widetilde R))\dd v.
\end{equation}

\noindent (iii) The following convergence in distribution holds
\begin{equation} \label{eq:clt2}
 |\LnR|^{1/2}   \; \frac{(\widehat{\beta}_n-\beta^{\star})}{\widehat\sigma_n}
  \stackrel{d}\longrightarrow \mathcal{N}(0,1) 
\end{equation}
 where 
\begin{equation} \label{eq:sigmaEst}
 \widehat{\sigma}_n^2= \widehat\sigma_n^2(\bX;\widetilde R)= |\LnR|\left( 
 \frac{\widehat{\beta}_n}{V_\LnR(\bX;\widetilde{R}) } + 
 \frac{ \widehat{\beta}_n^2 \,W_\LnR(\bX;\widetilde{R}) }
{V^2_\LnR(\bX;\widetilde{R})}
\right)
\end{equation}
 is a strongly consistent estimator of $\sigma^2$ where
 \[
W_\LnR(\bX;\widetilde{R}) =
 \int_\LnR  \int_{B(u,\widetilde{R})\cap \LnR}\ind (\bX\cap B(u,\widetilde R)= \emptyset ) \ind(\bX \cap B(v,\widetilde{R})= \emptyset) \dd u \dd v.
\]
\end{proposition}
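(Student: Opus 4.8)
The plan is to prove the three statements in order, building on Proposition~\ref{prop:h} and the ergodic machinery. For part~(i), I would apply the spatial ergodic theorem of \cite{A-NguZes79} separately to the numerator and denominator of $\widehat\beta_n$. Writing $\widehat\beta_n = N_{\LnR}/V_{\LnR}$ and dividing both by $|\LnR|$, I would argue that $|\LnR|^{-1}N_{\LnR}(\bX;\widetilde R)$ converges almost surely to its spatial mean, which by stationarity and Proposition~\ref{prop:h} equals $\beta^\star\,\E\big[\ind(\bX\cap B(0,\widetilde R)=\emptyset)\big] = \beta^\star(1-F(\widetilde R))$, while $|\LnR|^{-1}V_{\LnR}(\bX;\widetilde R)$ converges almost surely to $1-F(\widetilde R)$. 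Since $F(\widetilde R)<1$ (the denominator limit is strictly positive), the ratio converges almost surely to $\beta^\star$. A minor technical point here is that the erosion $\LnR$ differs from $\Lambda_n$ only near the boundary, so $|\LnR|/|\Lambda_n|\to 1$ and the ergodic averages over $\LnR$ have the same limits as those over $\Lambda_n$.

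For part~(ii), the key observation is that the numerator of the centered and rescaled estimator is essentially an innovation. Indeed, from Proposition~\ref{prop:h} the quantity $N_{\LnR}(\bX;\widetilde R)-\beta^\star V_{\LnR}(\bX;\widetilde R)$ is exactly the $h$-weighted innovation $I_{\LnR}(\bX,h)$ of \eqref{eq:defInn} for $h$ given in \eqref{eq:defh}, since $\int_{\LnR} h(u,\bX)\lambda_{\beta^\star}(u,\bX)\,\dd u = \beta^\star V_{\LnR}(\bX;\widetilde R)$ by the finite-range and identifiability assumptions. I would then write
\[
|\LnR|^{1/2}(\widehat\beta_n-\beta^\star)
= \frac{|\LnR|^{-1/2}\,I_{\LnR}(\bX,h)}{|\LnR|^{-1}V_{\LnR}(\bX;\widetilde R)}.
\]
The denominator converges almost surely to $1-F(\widetilde R)$ by part~(i), so by Slutsky's theorem it suffices to establish a central limit theorem for the normalized innovation $|\LnR|^{-1/2}I_{\LnR}(\bX,h)$. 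This is the heart of the proof and the main obstacle: I would invoke a central limit theorem for innovations of stationary point processes, of the type developed in \cite{A-BilCoeDro08}, which typically requires controlling the spatial dependence range. Here the situation is favourable because $h(u,\bX)=\ind(\bX\cap B(u,\widetilde R)=\emptyset)$ depends on $\bX$ only through $B(u,\widetilde R)$ and the conditional intensity has finite range $R\le\widetilde R$, so the summands are finite-range dependent and a blocking or martingale argument applies cleanly without needing local stability. The asymptotic variance is then computed as the limit of $|\LnR|^{-1}\Var I_{\LnR}(\bX,h)$. Expanding the variance of the innovation via the second-order GNZ/Campbell formulas produces two terms: a diagonal contribution giving $\beta^\star(1-F(\widetilde R))$ and an off-diagonal pair contribution giving ${\beta^\star}^2\int_{B(0,\widetilde R)}(1-F_{0,v}(\widetilde R))\,\dd v$. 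Dividing by the squared denominator limit $(1-F(\widetilde R))^2$ yields exactly $\sigma^2$ in \eqref{eq:sigma}.

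For part~(iii), I would show that $\widehat\sigma_n^2$ is a strongly consistent estimator of $\sigma^2$ and then conclude by Slutsky's theorem combined with part~(ii). Examining \eqref{eq:sigmaEst}, the first term $|\LnR|\,\widehat\beta_n/V_{\LnR}$ equals $\widehat\beta_n/\big(|\LnR|^{-1}V_{\LnR}\big)$, which by part~(i) converges almost surely to $\beta^\star/(1-F(\widetilde R))$, matching the first term of $\sigma^2$. For the second term, I would apply the ergodic theorem again to the double integral $W_{\LnR}(\bX;\widetilde R)$: after normalization, $|\LnR|^{-1}W_{\LnR}(\bX;\widetilde R)$ converges almost surely to $\int_{B(0,\widetilde R)}(1-F_{0,v}(\widetilde R))\,\dd v$, since the inner integral over $B(u,\widetilde R)\cap\LnR$ of the product of two emptiness indicators has, by stationarity, spatial mean $\int_{B(0,\widetilde R)}P_{\beta^\star}(\bX\cap B(0,\widetilde R)=\emptyset,\,\bX\cap B(v,\widetilde R)=\emptyset)\,\dd v = \int_{B(0,\widetilde R)}(1-F_{0,v}(\widetilde R))\,\dd v$. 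Combining this with the almost sure convergence of $\widehat\beta_n$ to $\beta^\star$ and of $|\LnR|^{-1}V_{\LnR}$ to $1-F(\widetilde R)$ shows the second term of $\widehat\sigma_n^2$ converges almost surely to the second term of $\sigma^2$. Thus $\widehat\sigma_n^2\to\sigma^2$ almost surely, and since $\sigma^2>0$, Slutsky's theorem applied to \eqref{eq:clt1} gives \eqref{eq:clt2}. The main difficulty throughout remains the innovation CLT in part~(ii); the consistency arguments in parts~(i) and~(iii) are comparatively routine applications of the ergodic theorem once the variance decomposition is identified.
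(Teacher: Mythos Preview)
Your proposal follows essentially the same route as the paper: ergodic theorem applied separately to numerator and denominator for (i), rewriting the centered estimator as the $h$-innovation and combining an innovation CLT with Slutsky for (ii), and ergodic consistency of $\widehat\sigma_n^2$ for (iii). The one substantive difference is the CLT you invoke: the paper uses \cite{A-CoeDerDroLav12} (checking a third-moment bound on $I_\Lambda$ via iterated GNZ) together with a three-term variance lemma from \cite{A-CoeRub12}, precisely because these do not require local stability, whereas your appeal to \cite{A-BilCoeDro08} is a reference the paper itself singles out as typically needing local stability---an assumption deliberately not made here---so that citation would need to be replaced or its hypotheses re-examined.
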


If the measure $P_{\beta^\star}$ is not ergodic, then it can be represented as a mixture of ergodic measures 
(see \cite[Theorem 14.10]{A-Geo77}). In this case, the asymptotic distribution in~\eqref{eq:clt1} becomes a mixture of Gaussian
 distributions. Since a mixture of standard Gaussian distribution is a standard Gaussian distribution, 
this explains why $P_{\beta^\star}$ is assumed to be ergodic for the result (ii) and omitted for the result (iii).

We underline the interesting and unexpected fact that the asymptotic variance $\sigma^2(\beta^\star,\widetilde R)$ (and its estimate) are independent on $\widetilde \lambda$ which is the unspecified part of the Papangelou conditional intensity.

 \section{Simulation study} \label{sec:simuls}
 
 In this section, we investigate the efficiency of the Poisson intensity estimate in a simulation study.
 We consider the models (i)-(v) described in Section~\ref{sec:examples}. We set $R=0.05$ and $\beta^\star=200$ 
for all the examples except for the model \texttt{g2} where we set $\beta^\star=50$. We consider
\begin{itemize}
  \item Strauss point process where models \texttt{s1} and \texttt{s2} have respectively $\gamma=0.2$ and $\gamma=0.8$.
  \item Strauss point process with hard-core with $\delta=R/2$  and where models \texttt{shc1} and \texttt{shc2} have respectively $\gamma=0.2$ and $\gamma=0.8$.
  \item Piecewise Strauss point process with $p=3$ and $(R_1,R_2,R_3)=(R/3,2R/3,R)$ and where models \texttt{ps1} and \texttt{ps2} have respectively $(\gamma_1,\gamma_2,\gamma_3)=(0.8,0.5,0.2)$ and $(\gamma_1,\gamma_2,\gamma_3)=(0.2,0.8,0.2)$.
  \item Triplets point process where models \texttt{t1} and \texttt{t2} have respectively ${\gamma}=0.2$ and $\gamma=0.8$.
  \item Geyer saturation point process with saturation threshold $s=1$  where models \texttt{g1} and \texttt{g2} have respectively ${\gamma}=0.5$ and $\gamma=1.5$.
\end{itemize}
 
 The estimate of the Poisson intensity we propose depends 
 on a parameter $\widetilde R$ and ought to work for any value larger than $R$. To illustrate this,  $\beta^\star$ is estimated for different values of $\widetilde R$. We consider
 $\widetilde{R}=R \times p$, where $p$ is a parameter varying from 0.9 to 1.2. 
 Each replication has been generated on the domain $[0,L]^2$ and the estimates are computed on $[\widetilde R,L-\widetilde R]^2$. To illustrate the convergence as the window grows up, the simulations are done with $L=1$ and $L=2$. The simulations are done using the \texttt{R} package \texttt{spatstat} (\cite{A-BadTur05}). The empirical results based on 500 replications are presented in Table~\ref{tab:sim1}. 

 The results are very satisfactory. We observe that for values of $\widetilde{R}\geq R$ (i.e. for values equal 
or larger than the finite range), 
 the estimates are almost unbiased whereas a non negligible bias is observed when $\widetilde{R}<R$. The bias is
  negative when the point process is repulsive (i.e. when $\lambda(u,\bx)\geq 1$) and positive when the point process is attractive (i.e. when $\lambda(u,\bx)\leq 1$) in particular for the model \texttt{g2}.
As underlined by one of the reviewers the sign of the bias when $\widetilde R \geq R$ is violated can be explained by the fact that $V$ is underestimated for attractive point processes and overestimated for repulsive point processes. The closer $\widetilde {R}$ is 
  to the finite range $R$, the lower the standard deviations are. The number of points with no $\widetilde R$-close neighbours is lower and lower as $\widetilde R$ 
 grows up, which explains this variance increase. Finally, we can also highlight that the asymptotic results are partly checked.
  When the window varies from $[0,1]^2$ to $[0,2]^2$, the standard deviations of the estimates are divided by 2 which turns out to be the square root 
 of the ratios of the domains areas.

 Table~\ref{tab:sim1} points out the dangers to choose a wrong value for the finite range parameter $R$. From a practical point of view, if this parameter is not set by the user, it has to be estimated. When the Papangelou conditional intensity is entirely modeled the parameters can be estimated using for instance the maximum likelihood or the pseudo-likelihood (e.g. \cite{B-MolWaa04}) and the irregular parameters (such as the finite range $R$) can be estimated using the profile pseudo-likelihood (\cite{A-BadTur00}). Such an approach can not be used here since we do not want to model the higher order interaction terms. We noticed that for a given (repulsive) point pattern our estimate is increasing with the value of $\widetilde R$ until a certain value where the regime is changing. Therefore we propose to estimate the finite range parameter $R$ as follows: a) Select a reasonable grid of values for $\widetilde R$ (we chose $[0.02,0.08]$ in our example). b)  Compute $\widehat \beta(\widetilde R)$ for each value of the grid. c) We consider a piecewise linear regression of $\widehat \beta(\widetilde R)$ in terms of $\widetilde R$ and we define $\widehat R$ as the breakpoint on the slope  (for this step we used the \texttt{R} package \texttt{segmented} \cite{muggeo:08}). Empirical results for the models \texttt{s1} and \texttt{s2} are presented in Table~\ref{tab:Rest}. Using the strategy described above, we manage to estimate $R$ quite well. The resulting estimates of the parameter $\beta^\star$ seem to be still unbiased. We note that the standard deviation is greater than in the case where $R$ is exaclty known (i.e. column $p=1$ in Table~\ref{tab:sim1}). However the loss of efficiency is quite reasonable.

 Finally, we investigate the finite sample properties of the estimator of the asymptotic variance of $\widehat \beta$ given by~\eqref{eq:sigmaEst} and the asymptotic normality result~\eqref{eq:clt2}. Table~\ref{tab:ic} focuses on the models \texttt{s1} and \texttt{s2} and displays the 95\% empirical coverage rates (i.e. the fraction of asymptotic confidence intervals constructed from the asymptotic normality result \eqref{eq:clt2} covering the true parameter value, here $\beta^\star=200$). As expected, the theoretical coverage rate is respected whatever the value of $\widetilde R\geq R$ and far from the result when $\widetilde R=90\% R$. Depite Proposition~\ref{prop:asymp} does not cover this case, it is interesting to note that the empirical results are still quite satisfactory if we replace $R$ by its estimated value.

 \begin{table}[H]
 {\small
 \begin{center}
 \begin{tabular}{rrrrrrr}
   \hline
   && \multicolumn{4}{c}{Mean (std dev.) for $\widetilde{R}=R \times p$} \\
 Model & $\overline n$ & $p=0.9$ & $p=1$ & $p=1.1$ & $p=1.2$ \\ 
   \hline
 \texttt{s1} $L=1$ & 99  & 174.0 (26.2) & 203.6 (34.2) & 204.3 (38.3) & 205.5 (44.4) \\ 
 $L=2$ & 393  & 172.6 (12.7) & 200.5 (16.4) & 200.3 (18.5) & 200.6 (20.8) \\ 
   \texttt{s2} $L=1$ & 156  & 192.0 (31.3) & 203.3 (41.8) & 203.9 (48.1) & 204.7 (55.8) \\ 
    $L=2$ & 622 & 190.7 (16.0) & 200.7 (19.5) & 200.3 (22.8) & 200.4 (26.5) \\ 
 \hline
   \texttt{shc1} $L=1$ & 94  & 174.3 (23.2) & 201.9 (30.3) & 203.3 (36.8) & 205.0 (41.9) \\ 
   $L=2$ & 379 &  174.1 (12.1) & 201.6 (15.7) & 201.9 (17.8) & 202.1 (21.1) \\ 
   \texttt{shc2} $L=1$ & 130 &  194.3 (30.2) & 205.4 (36.7) & 207.0 (42.8) & 209.1 (51.6) \\ 
   $L=2$ & 514 &  190.2 (15.2) & 198.5 (17.8) & 199.5 (21.1) & 200.6 (24.4) \\ 
 \hline
   \texttt{ps1} $L=1$ & 111  & 172.2 (25.1) & 202.6 (33.2) & 203.5 (38.7) & 206.5 (47.2) \\    
   $L=2$ & 445 & 171.5 (12.4) & 201.6 (16.3) & 201.7 (18.6) & 202.4 (21.6) \\ 
   \texttt{ps2} $L=1$ & 134 & 191.1 (30.7) & 201.7 (37.7) & 206.2 (45.2) & 208.8 (54.2) \\ 
    $L=2$ & 535&  192.8 (14.5) & 201.7 (17.5) & 202.1 (20.6) & 202.0 (24.4) \\ 
  \hline
   \texttt{t1} $L=1$ & 159 & 181.6 (34.6) & 204.5 (40.6) & 205.2 (48.7) & 205.8 (58.8) \\ 
  $L=2$ & 634 & 180.5 (16.7) & 201.6 (19.5) & 202.1 (22.9) & 202.8 (26.7) \\ 
   \texttt{t2} $L=1$ &178 &188.7 (30.7)& 200.7 (35.0) & 202.4 (39.9) & 205.1 (50.2)  \\ 
   $L=2$ & 727 &  189.3 (17.5) & 200.0 (20.2) & 199.8 (24.5) & 199.8 (30.8) \\ 
 \hline
   \texttt{g1} $L=1$ & 110 &  174.8 (26.6) & 201.4 (34.4) & 201.5 (40.2) & 203.6 (46.0) \\ 
   $L=2$ & 440 &  175.0 (13.3) & 200.7 (17.4) & 201.3 (19.9) & 201.6 (23.1) \\ 
  \texttt{g2} $L=1$ & 70 & 54.5 (10.4) & 50.0 (10.4) & 50.2 (11.4) & 50.5 (12.1) \\ 
   $L=2$ & 276 &  54.4 (5.4) & 50.2 (5.4) & 50.2 (5.8) & 50.3 (6.3) \\ \hline
 \end{tabular}
 \end{center}}
 \caption{\label{tab:sim1} Mean and standard deviations of estimates of the Poisson intensity parameter $\beta^{\star}$ for different parameters $\widetilde R$ based on 500 replications of different models generated on the window $[0,L]^2$ and estimated on the window $[\widetilde R,L-\widetilde R]^2$ for $L=1,2$. The second column, $\overline n$, indicates the average number of points (over the 500 replications) falling into $[0,L]^2$, i.e. the Monte-Carlo estimation of $\E n(\bX\cap[0,L]^2)$.}
 \end{table}

\begin{table}[H]
\centering
\begin{tabular}{rrr}
\hline
& \multicolumn{1}{c}{$\widehat R$} & \multicolumn{1}{c}{$\widehat \beta$} \\
& mean (std dev.) & mean (std dev.) \\
\hline
\texttt{s1} $L=1$ & 0.052 (0.004) &197.7 (38.8)\\
 $L=2$ & 0.051 (0.003) & 198.9 (22.1)\\
\texttt{s2} $L=1$ &0.051  (0.004) &201.7 (38.9)\\
 $L=2$ & 0.051 (0.003)& 198.4 (21.9)\\
 \hline
\end{tabular}
\caption{\label{tab:Rest} Mean and standard deviations of estimates of the finite range parameter and related Poisson intensity parameter estimates (i.e. $\widehat \beta(\widehat R)$) based on 500 replications of Strauss models on the window $[0,L]^2$.}
\end{table}

\begin{table}[H]
\centering
\begin{tabular}{rrrrrr}
\hline
& \multicolumn{4}{c}{$\widetilde R = p\times R$} & $\widehat R$\\
& $p=0.9$ & $p=1$ & $p=1.1$  &$p=1.2$ &\\
\hline
\texttt{s1} $L=1$ &  77.4\% & 94.8\% &95.0\% &94.2\%& 89.2\%\\
 $L=2$ & 40.0\%&94.8\%& 93.8\%&95.4\% & 90.4\%\\
\texttt{s2} $L=1$ &82.6\% & 93.2\%& 94.6\%& 93.8\% & 93\%\\
 $L=2$ & 69.8\%&95.0\%&93.6\%&92.8\% & 90.8\%\\
 \hline
\end{tabular}
\caption{\label{tab:ic}  Empirical coverage rates (i.e. the fraction of 95\% confidence intervals covering the true parameter value $\beta^\star=200$) based on 500 replications of Strauss models on the window $[0,L]^2$.}
\end{table}

\appendix
\section{Proof of Proposition~\ref{prop:asymp}} \label{sec-proofs}

We use the following additional notation in the proofs. The notation $\sum^{\neq}$ stands for summation over  distinct pairs of points, $\Lambda^c$ denotes the complementary set of $\Lambda$ in $\R^d$. 
Let $\tau= (\tau_x)_{x\in\R^d}$ be the shift group, where $\tau_x$ is the translation by the vector  $-x\in\R^d$. For brevity, when there is no ambiguity, we skip the dependence on $\bX$ and on $\widetilde R$ of the variables $N_{\LnR}, V_\LnR, W_\LnR, \widehat \sigma_n^2$.  

\begin{proof}

(i) There exists at least one stationary Gibbs 
measure. If this measure is unique, it is ergodic. Otherwise, it can be represented as a mixture
of ergodic measures (see \cite[Theorem 14.10]{A-Geo77}). Therefore, we can assume,
for this proof, that $P_{\beta^\star}$ is ergodic. We apply a general ergodic theorem for spatial point processes obtained
by \cite{A-NguZes79b}. For this, we have to check that $N_{\Lambda}$ and $V_\Lambda$ are additive, invariant by
 translation and satisfy $\E N_{\Lambda_0} <\infty$ and $\E V_{\Lambda_0} <\infty$, where for instance $\Lambda_0$ is the unit cube $[0,1]^d$. The additivity property is straightforward.
 Regarding the invariance by translation, let $\bx \in \Omega$, $\Lambda\subset \R^d$ and $y \in \R^d$, we have
\begin{align*}
N_{\tau_{y} \Lambda}(\tau_{y}\bx;\widetilde{R})
&= \displaystyle \sum_{u\in{\tau_{y}\bx}}\ind_{\tau_{y}\Lambda}(u)\ind(d(u, \tau_{y}\bx\setminus u)>\widetilde{R})      \\
&= \displaystyle \sum_{u\in{\tau_{y}\bx}}\ind_{\Lambda}(u+y)\ind(d(u, \tau_{y}\bx\setminus u)>\widetilde R)  \\
&= \displaystyle \sum_{v\in{\bx}}\ind_{\Lambda}(v)\ind(d(v-y, \tau_{y}\bx\setminus(v-y))>\widetilde R)  \\
&= \displaystyle \sum_{v\in{\bx}}\ind_{\Lambda}(v)\ind(d(v, \bx\setminus v)>\widetilde R) = N_{\Lambda}(\bx;\widetilde{R}).
\end{align*}
Following similar arguments, we also have $V_{\tau_y\Lambda}(\tau_y \bx;\widetilde{R}) =V_\Lambda(\bx;\widetilde{R})$. Let us now focus on the integrability conditions. First, we have
\[
\E \big|V_{\Lambda_0}\big| = \E V_{\Lambda_0}=
 \E \int_{\Lambda_0}\ind(\bX\cap B(u,\widetilde R)=\emptyset)\dd u \\
=|\Lambda_0|(1-F(\widetilde R)) =1-F(\widetilde R)< \infty.   
\]
Second, from Proposition~\ref{prop:h} 
\[
  \E \big|N_{\Lambda_0}\big|= \E N_{\Lambda_0} = \beta^\star \E V_{\Lambda_0} = \beta^\star(1-F(\widetilde R)).
\]
Therefore as $n\rightarrow \infty$, we derive the following almost sure convergences 
\begin{align}
|\LnR|^{-1}N_{\LnR} &\longrightarrow \E N_{\Lambda_0}=\beta^\star(1-F(\widetilde{R})) \nonumber\\
|\LnR|^{-1}V_{\LnR} &\longrightarrow \E V_{\Lambda_0}=1-F(\widetilde{R}). \label{eq:convVn}
\end{align}
Since the function $(y,z)\mapsto \frac{y}{z} $ is continuous for any $(y,z)$ in the open set
$\{(y,z)\in \R^2:z\neq 0\}$, we deduce the expected result  for any fixed $\widetilde R \geq R$.

(ii) From the definition of the innovations process \eqref{eq:defInn}, we have
\begin{equation} \label{eq:betaInn}
  |\LnR|^{1/2} ( \widehat \beta_n -\beta^\star) = |\LnR|^{1/2} \frac{ N_\LnR - \beta^\star V_\LnR}{V_\LnR} = 
\frac{ |\LnR|^{-1/2} I_\LnR(\bX,h)}{|\LnR|^{-1}V_\LnR}.
\end{equation}
Henceforth from Slutsky's Theorem and \eqref{eq:convVn}, the result will be deduced if we prove that $|\LnR|^{-1/2} I_\LnR(\bX,h)$ tends to a zero mean
 Gaussian distribution. This will be done by 
combining two papers \cite{A-CoeDerDroLav12} and \cite{A-CoeRub12} dealing with asymptotic normality and variance calculation of innovations processes. For brevity, we write $I_\Lambda$ in place of $I_\Lambda(\bX,h)$.

\bigskip

Following \cite[Lemma~3]{A-CoeDerDroLav12}, we have to assume that $P_{\beta^{\star}}$ is ergodic and to check that for any bounded domain $\Lambda \subset \R^d$: (I) $\E |I_{\Lambda}|^3<\infty$, (II) $\E I_{\Gamma_n}^2 \to 0$ for any sequence of bounded domains $\Gamma_n\subset \R^d$ such that $\Gamma_n\to 0$ as $n\to \infty$ and 
(III) $I_\Lambda$ depends on $X_{\Lambda\oplus \widetilde R}$. (III) follows from the finite range property \eqref{eq:range} and the definition of $h$ given by 
\eqref{eq:defh}. We now focus on verifying~(I). We have from Cauchy-Schwarz inequality
\begin{equation}\label{eq:I3}
  \E |I_\Lambda|^3 \leq 4 \left( \E  N_\Lambda^3 + {\beta^\star}^3\E  V_\Lambda^3 \right).
\end{equation}
On the one hand, we have since for any $m\geq 1$,  $h^m(u,\bx)=h(u,\bx)$
\begin{align}
\E  N_\Lambda^3 =& \E N_\Lambda + 3 \E \sum_{u,v \in X_\Lambda}^{\neq} h(u,\bX\setminus u)h(v,\bX\setminus v) \nonumber\\
&+ \E \sum_{u,v,w \in X_\Lambda}^{\neq} h(u,\bX\setminus u)h(v,\bX\setminus v) h(w,\bX\setminus w). \label{eq:T123} 
\end{align}
Let $T_1,T_2$ and $T_3$ denote the three terms of the right-hand side of~\eqref{eq:T123}. Following the proof of (i) in Proposition~\ref{prop:asymp}, $\E T_1\leq \beta^\star |\Lambda|$. Now, for any $u,v \in \R^d$ and $\bx \in \Omega$, we define the second order Papangelou conditional intensity by
\[
  \lambda_{\beta^\star} (u,v,\bx) = \lambda_{\beta^\star} (u,\bx)\lambda_{\beta^\star} (v,\bx \cup u) 
  =\lambda_{\beta^\star} (v,\bx)\lambda_{\beta^\star} (u,\bx \cup v).
\]
By using iterated versions of the GNZ formula 
\eqref{eq:gnz}, we derive the following computations
\begin{align*}
  T_2 &=3 \E \int_{\Lambda}\int_\Lambda h(u,\bX \cup v)h(v,\bX \cup u )\lambda_{\beta^\star}(u,v,\bX)\dd u\dd v \\ 
  &=  3{\beta^\star}^2\E \int_\Lambda\int_\Lambda \ind(d(u,\bX \cup v)> \widetilde R)\ind(d(v,\bX \cup u)> \widetilde R) \widetilde \lambda(u,\bX)\widetilde\lambda(v,\bX\cup u)\dd u\dd v \\
  &= 3{\beta^\star}^2 \E\int_\Lambda \int_\Lambda \ind(d(u,\bX \cup v)> \widetilde R)\ind(d(v,\bX \cup u)> \widetilde R) \widetilde\lambda^2(u,\emptyset) \dd u \dd v\\
   &\leq 3{\beta^\star}^2 |\Lambda|^2.
 \end{align*}
Using similar ideas, we leave the reader to check that $T_3 \leq {\beta^\star}^3 |\Lambda|^3$. On the other hand, we also check that
\begin{align*}
  \E  V_\Lambda^3 & = 
  \E\int_\Lambda\int_\Lambda\int_\Lambda \ind(d(u,\bX)> \widetilde R)\ind(d(v,\bX)> \widetilde R) \ind(d(w,\bX)> \widetilde R) \dd u \dd v \dd w \\
& \leq 
 |\Lambda|^3.
\end{align*}
We finally obtain
\[
  \E|I_\Lambda|^3 \leq 4\beta^\star|\Lambda|+12{\beta^{\star}}^2|\Lambda|^2+8{\beta^{\star}}^3|\Lambda|^3 < \infty
\]
whereby (I) is checked. Following the proof of (I), we may prove that for any bounded domain $\Lambda$
\[
  \E I_\Lambda^2 \leq 2\beta^\star |\Lambda| + 4{\beta^\star}^2 |\Lambda|^2
\]
which proves (II). Therefore we can invoke \cite[Lemma~3]{A-CoeDerDroLav12} to obtain the asymptotic normality of $\widehat \beta_n$. 

The derivation of the asymptotic 
variance of $|\LnR|^{-1}\Var I_{\LnR}(\bX,h)$ is done using Lemma~\ref{lem:varI} in Appendix~\ref{sec:appVar} with the function $g=h$ given 
by~\eqref{eq:defh}. We omit the details and leave the reader to check that $h$ fulfills the integrability assumptions in Lemma~\ref{lem:varI} and focus
 on the derivations of the quantities $A_1,A_2$ and $A_3$. First, it is clear that $A_1=\beta^\star (1-F(\widetilde R))$. Second, note that from~\eqref{eq:range} we have for any $v\in B^c(0,R)$ and $x\in \Omega$
 \[
   \lambda_{\beta^\star}(0,\bx)\lambda_{\beta^\star}(v,\bx)-\lambda_{\beta^\star}(0,v ,\bx)=0.
 \]
Since $B(0,R)\subseteq B(0,\widetilde R)$  
\[
  \E \int_{B(0,\widetilde R)\backslash B(0,R)}\big(\lambda_{\beta^\star}(0,\bX)\lambda_{\beta^\star}(v,\bX)-\lambda_{\beta^\star}(0,v ,\bX)\big)dv=0
\]
which means that we can substitute $R$ by $\widetilde R$ in the term $A_2$. Then, we derive
\begin{align*}
A_2=& \E \int_{ B(0,\widetilde R)} \ind(d(0,\bX)> \widetilde R)\ind(d(v,\bX)>\widetilde R)\big({\beta^\star}^2-\lambda_{\beta^\star}(0,v,\bX) \big) \dd v. 
\end{align*}
Third,
\begin{align*}
A_3=& \E \int_{{B}(0,\widetilde R)} \big( \ind(d(0,\bX \cup v)> \widetilde R) -\ind(d(0,\bX)> \widetilde R) \big) \\
&\quad \times
\big( \ind(d(v,\bX \cup 0)> \widetilde R) -\ind(d(v,\bX)> \widetilde R) \big) 
\lambda_{\beta^\star}(0,v,\bX) \dd v \\
=& \E \int_{ B(0,\widetilde R)} \ind(d(0,\bX)> \widetilde R)\ind(d(v,\bX)> \widetilde R)\lambda_{\beta^\star}(0,v,\bX)  \dd v \\
\end{align*}
which leads to 
\begin{align*}
  A_2+A_3 &= {\beta^\star}^2 \E \int_{ B(0,\widetilde R)} \ind(d(0,\bX)> \widetilde R)\ind(d(v,\bX)> \widetilde R)\dd v\\
  &= {\beta^\star}^2 \int_{ B(0,\widetilde R)} (1- F_{0,v}(\widetilde R))\dd v.
\end{align*}
In other words
\[
  |\LnR|^{-1} \Var I_\LnR (\bX,h) \rightarrow \beta^\star (1-F(\widetilde R))+
   {\beta^\star}^2 \int_{ B(0,\widetilde R)} (1- F_{0,v}(\widetilde R))\dd v
\]
which leads to the result from \eqref{eq:convVn} and \eqref{eq:betaInn}.

(iii) We check that the proposed estimate $\widehat \sigma_n^2$ tends almost surely towards $\sigma^2$ as $n\to \infty$
 (a convergence in probability would be sufficient). From \eqref{eq:sigmaEst}, \eqref{eq:convVn} and \eqref{eq:sigma}, we need to prove that
\[
  |\LnR|^{-1} W_{\LnR} \longrightarrow \int_{{B}(0,\widetilde R)}  (1-F_{0,v}(\widetilde R)) \dd v.
\] 
We decompose $W_{\LnR}$ as $W_{\LnR}^{(1)}+W_{\LnR}^{(2)}$ where
\begin{align*}
W_{\LnR}^{(1)} &= \int_{\LnR_{\ominus \widetilde R}} \int_{ B(u,\widetilde R)} \ind(d(u,\bX)> \widetilde R)\ind(d(v,\bX)> \widetilde R) \dd u \dd v \\
W_{\LnR}^{(2)}&= \int_{\LnR \setminus \LnR_{\ominus \widetilde R} } \int_{ B(u,\widetilde R)\cap \LnR} \ind(d(u,\bX)> \widetilde R)\ind(d(v,\bX)> \widetilde R) \dd u \dd v 
\end{align*}
where $\LnR_{\ominus \widetilde R}=\{u\in \LnR: B(u,\widetilde R)\subseteq \LnR \}$.
Regarding the first term, we can now apply the ergodic theorem to prove that almost surely
\begin{align*}
|\LnR_{\ominus \widetilde R}|^{-1} W_{\LnR}^{(1)} &\longrightarrow \E \int_{{B} (0,\widetilde R)}  \ind(d(0,\bX)> \widetilde R)\ind(d(v,\bX)> \widetilde R) \dd v 
\end{align*}
where the latter comes from the finite range assumption \eqref{eq:range} and the fact that $\widetilde R\geq R$.
Since $|\LnR| \sim |\LnR_{\ominus \widetilde R}|$ as $n\rightarrow \infty$, the proof is done by verifying that $|\LnR|^{-1} W_{\LnR}^{(2)} \to 0$ which is straightforward since 
\[
  |\LnR|^{-1} W_{\LnR}^{(2)} \leq \frac{|\LnR \setminus \LnR_{\ominus \widetilde R}|}{|\LnR|} \; | B(u,\widetilde R)| = o(1).
\]
\end{proof}
\section{Auxiliary lemma}\label{sec:appVar}

The following result is a particular case of \cite[Proposition~3.2]{A-CoeRub12}.
\begin{lemma} \label{lem:varI}
Assume the Papangelou conditional intensity of the point process $\bX$ satisfies \eqref{eq:lbeta} and \eqref{eq:range} with finite range $R$.
Let $g:\R^d\times \Omega \to \R$ be a nonnegative measurable function satisfying for any $u\in \R^d$ and  $\bx \in \Omega$
\[
 g(u,\bx)=  g(0,\tau_{u}\bx)   \quad \mbox{ and } \quad g(u,\bx)=g(u,\bx_{ B(u,\widetilde R)})
\]
with $\widetilde R\geq R$. We also assume that the following integrability conditions are fulfilled 

\begin{align*}
&\E [ g^2(0,\bX) \lambda_{\beta^\star}(0,\bX)] <\infty \\
&\E  \int_{ B(0,R)} g(0,\bX)g(v,\bX) \big|\lambda_{\beta^\star}(0,\bX)\lambda_{\beta^\star}(v,\bX)-
\lambda_{\beta^\star}(0,v ,\bX)\big| \dd v  <\infty\\
 & \E \int_{ B(0,\widetilde R)} 
\big|\big(g(0,\bX \cup v) -g(0,\bX)\big) \big( g(v,\bX \cup 0) -g(v,\bX)\big)\bigg
|
  \lambda_{\beta^\star}(0,v ,\bX) \dd v <\infty.
\end{align*}
Then, as $n \to \infty$
\[
  |\Lambda_n|^{-1}\Var I_{\Lambda_n}(\bX,g)  \longrightarrow A_1+A_2+A_3
\]
where
\begin{align*}
A_1 &= \E  [g^2(0,\bX) \lambda_{\beta^\star}(0,\bX)] \\
A_2 &= \E \int_{ B(0,R)} g(0,\bX)g(v,\bX) \big(  \lambda_{\beta^\star}(0,\bX)\lambda_{\beta^\star}(v,\bX)-
\lambda_{\beta^\star}(0,v,\bX)
 \big) \dd v \\
 A_3&= \E \int_{ B(0,\widetilde R)}
\big(g(0,\bX \cup v) -g(0,\bX)\big) \big( g(v,\bX \cup 0) -g(v,\bX)\big)
 \lambda_{\beta^\star}(0,v ,\bX) \dd v. 
 \end{align*}
\end{lemma}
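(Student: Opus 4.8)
The plan is to compute the variance directly from the definition \eqref{eq:defInn} of the innovation and the GNZ formula \eqref{eq:gnz}. Since \eqref{eq:gnz} shows that $I_{\Lambda}(\bX,g)$ is centered, we have $\Var I_{\Lambda}(\bX,g) = \E I_{\Lambda}(\bX,g)^2$. Writing $I_\Lambda = S_\Lambda - D_\Lambda$ with $S_\Lambda = \sum_{u\in\bX_\Lambda} g(u,\bX\setminus u)$ and $D_\Lambda = \int_\Lambda g(u,\bX)\lambda_{\beta^\star}(u,\bX)\,\dd u$, I would expand
\[
\E I_\Lambda^2 = \E S_\Lambda^2 - 2\,\E[S_\Lambda D_\Lambda] + \E D_\Lambda^2
\]
and evaluate each term using first- and second-order versions of the GNZ formula, the latter involving the second order Papangelou conditional intensity $\lambda_{\beta^\star}(u,v,\bX) = \lambda_{\beta^\star}(u,\bX)\lambda_{\beta^\star}(v,\bX\cup u)$ already used in the proof of Proposition~\ref{prop:asymp}.

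For $\E S_\Lambda^2$ I would split the square into a diagonal and an off-diagonal part. The diagonal part $\E\sum_{u\in\bX_\Lambda} g(u,\bX\setminus u)^2$ equals, by \eqref{eq:gnz} and stationarity, $|\Lambda|\,\E[g^2(0,\bX)\lambda_{\beta^\star}(0,\bX)] = |\Lambda| A_1$. The off-diagonal part, rewritten with the iterated GNZ formula, becomes $\E\int_\Lambda\int_\Lambda g(u,\bX\cup v)g(v,\bX\cup u)\lambda_{\beta^\star}(u,v,\bX)\,\dd u\,\dd v$. The two remaining terms follow from the first-order formula: $\E[S_\Lambda D_\Lambda] = \E\int_\Lambda\int_\Lambda g(u,\bX)g(v,\bX\cup u)\lambda_{\beta^\star}(u,v,\bX)\,\dd u\,\dd v$ and $\E D_\Lambda^2 = \E\int_\Lambda\int_\Lambda g(u,\bX)g(v,\bX)\lambda_{\beta^\star}(u,\bX)\lambda_{\beta^\star}(v,\bX)\,\dd u\,\dd v$.

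I would then collect the three double integrals over $\Lambda\times\Lambda$. Symmetrizing the cross term in $(u,v)$ — legitimate since both $\Lambda\times\Lambda$ and $\lambda_{\beta^\star}(u,v,\bX)$ are symmetric — a short algebraic simplification shows that the combined integrand equals exactly the sum of the integrands of $A_2$ and $A_3$, namely
\[
g(u,\bX)g(v,\bX)\big(\lambda_{\beta^\star}(u,\bX)\lambda_{\beta^\star}(v,\bX)-\lambda_{\beta^\star}(u,v,\bX)\big) + \big(g(u,\bX\cup v)-g(u,\bX)\big)\big(g(v,\bX\cup u)-g(v,\bX)\big)\lambda_{\beta^\star}(u,v,\bX).
\]
Calling this integrand $\phi(u,v,\bX)$, I obtain $\Var I_\Lambda = |\Lambda| A_1 + \int_\Lambda\int_\Lambda \E[\phi(u,v,\bX)]\,\dd u\,\dd v$. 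By the translation invariance of $g$ and $\lambda_{\beta^\star}$ and the stationarity of $P_{\beta^\star}$, $\E[\phi(u,v,\bX)]$ depends only on $w=v-u$; write it $\psi(w)$. The finite-range property \eqref{eq:range} makes the first summand of $\phi$ vanish for $\|w\|>R$, and the localization $g(u,\bx)=g(u,\bx_{B(u,\widetilde R)})$ makes the second vanish for $\|w\|>\widetilde R$, so $\psi$ is supported in $B(0,\widetilde R)$ and integrable by hypothesis. A change of variables and division by $|\Lambda|$, together with the compactness of $\mathrm{supp}\,\psi$ (so that the boundary layer of width $\widetilde R$ is negligible as the cubes $\Lambda_n$ grow), give $|\Lambda|^{-1}\int_\Lambda\int_\Lambda\psi(v-u)\,\dd u\,\dd v \to \int_{B(0,\widetilde R)}\psi(w)\,\dd w = A_2 + A_3$, which is the claim.

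The main obstacle is the bookkeeping in the second step: correctly applying the iterated GNZ formula and tracking which integration points belong to the background configuration (the arguments $\bX\cup v$ and $\bX\cup u$), since any slip there corrupts the algebraic cancellation in the third step. The three integrability hypotheses are precisely what justify Fubini's theorem, the GNZ manipulations, and the passage to the limit, so verifying them is routine once the decomposition is set up. Alternatively, the statement is obtained by directly specializing \cite[Proposition~3.2]{A-CoeRub12} to $g$.
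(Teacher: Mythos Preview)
Your proposal is correct. The paper itself does not give a proof of this lemma: it simply records the statement as ``a particular case of \cite[Proposition~3.2]{A-CoeRub12}'' and refers the reader to that paper. You cite the same external result at the end of your proposal, so in that sense the two agree.

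Where you go further is in supplying a self-contained argument. Your expansion of $\E I_\Lambda^2$ via the first- and second-order GNZ formulas, the symmetrization of the cross term $\E[S_\Lambda D_\Lambda]$, and the algebraic identification of the double-integral integrand with the sum of the $A_2$- and $A_3$-integrands are all correct; the subsequent reduction to a convolution-type integral $\int_\Lambda\int_\Lambda\psi(v-u)\,\dd u\,\dd v$ with $\psi$ compactly supported in $B(0,\widetilde R)$ and the limit $|\Lambda_n|^{-1}\int_{\Lambda_n}\int_{\Lambda_n}\psi(v-u)\,\dd u\,\dd v\to\int\psi$ are standard. This is exactly the computation underlying \cite[Proposition~3.2]{A-CoeRub12}, so your direct route and the paper's citation are not really different mathematically---you have merely unpacked what the cited proposition does. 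The benefit of your write-up is that it keeps the paper self-contained and makes transparent where each of the three integrability hypotheses is used.
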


\bibliographystyle{plain}

\bibliography{PoissInt}
 
\end{document}